\def\mr#1 (#2){\href{http://www.ams.org/mathscinet-getitem?mr=#1}{MR: #1 (#2)}}
\def\MR#1 (#2){\href{http://www.ams.org/mathscinet-getitem?mr=#1}{MR: #1 (#2)}}
\def\MRa#1 (#2){\href{http://www.ams.org/mathscinet-getitem?mr=#1}{MR: #1}
\href{http://www.ams.org/mathscinet-getitem?mr=#1}{(#2)}}
\newcommand{\N}{\ensuremath{\mathbb{N}}}
\newcommand{\R}{\ensuremath{\mathbb{R}}}
\newcommand{\T}{\ensuremath{\mathbb{T}}}
\newcommand{\Z}{\ensuremath{\mathbb{Z}}}
\newcommand{\e}{\ensuremath{\varepsilon}}
\newcommand{\f}{\ensuremath{\varphi}}
\newcommand{\m}{\ensuremath{\mu}}
\edef\od{\:}
\def\:{\colon}
\def\df{\mathrel{\mathop:}=}
\def\too{\longrightarrow}
\newcommand{\dm}[1]{\mathrm{d}#1}
    \newtheorem{thm}{Theorem}[section]
    \newtheorem{con}{Conclusion}[section]
\theoremstyle{remark}
    \newtheorem{rem}[thm]{Remark}
\newcommand{\n}[1]{}
\newcommand{\ko}[1]{%
                    }%
\title[Transitive cylinders with full-dimensional discrete points]%
	{Transitive cylinder flows\\
	 whose set of discrete points\\
	 is of full Hausdorff dimension}
\author[Eugeniusz Dymek]{Eugeniusz Dymek}
\newcommand{\qqk}[1][l]{\ensuremath{q_{1+k_{#1}}}}
\newcommand{\qk}[1][]{\qqk[n #1]}
\newcommand{\qq}[1][l]{\ensuremath{q_{k_{#1}}}}
\newcommand{\q}[1][]{\qq[n #1]}
\newcommand{\qqa}[1][l]{\ensuremath{A_{#1}q_{k_{#1}}}}
\newcommand{\qa}[1][]{\qqa[n #1]}
\newcommand{\A}[1][]{\ensuremath{A_{n #1}}}
\renewcommand{\L}[1][]{\ensuremath{L_{n #1}}}
\renewcommand{\t}[1][]{\ensuremath{T_{f_{#1}}}}
\newcommand{\xr}{\ensuremath{X\times\R}}
\begin{document}
\begin{abstract}
For each irrational $\alpha\in[0,1)$ we construct a continuous function $f\: [0,1)\to \R$
such that the corresponding cylindrical transformation $[0,1)\times\R \ni (x,t)
\mapsto (x+\alpha, t+ f(x)) \in [0,1)\times\R$ is transitive and the Hausdorff dimension of the set
of points whose orbits are discrete is 2. Such cylindrical transformations are shown
to display a certain chaotic behaviour of Devaney-like type.
%
\end{abstract}

\maketitle

\section*{Introduction}
Chaotic behaviour in dynamical systems has been of particular interest in topological
dynamics since about the second half of the 20th century.%
	\footnote{Let us remind that one of the most popular definitions of chaos, the
	Devaney chaos, comprises dense orbits (transitivity), dense set of periodic orbits
	and sensitivity to initial conditions (the last condition usually follows from the
	first two ones).}
Few examples had been studied earlier, thus it must have been surprising to Abram
Besicovitch to discover a homeomorphism of the cylinder $\T\times\R$ ($\simeq [0,1)\times\R$)
with both dense and (closed) discrete orbits (\cite{bes2}, see also \cite{bes1}).
It is an example of a class called now cylindrical transformations or, more generally,
skew products. \emph{Cylindrical transformation} or \emph{cylinder}%
	\footnote{Also called \emph{cylinder flow} or \R-extension.}
is a mapping of the form
\[
	T_f\:X\times\R\ni (x,t) \mapsto (Tx, t+f(x)) \in X\times\R,
\]
where $T\: X\to X$ is, in the most general setting, a homeomorphism of a topological
space, and $f\: X\to \R$ is a continuous function. They arise naturally in ergodic
theory, as their iterates are the products of respective iterates of $T$ and the ergodic
sums of $f$ over $T$. Formally, they were introduced (and even earned
their own chapter) in a textbook on topological dynamics \cite[Chapter 14]{gohe}.
However, such transformations were already considered before: Besicovitch viewed his
cylinder on $[0,1)\times\R$ as a homeomorphism of the punctured plane $\R^2\setminus\{0\}$
and expanded it to a homeomorphism of the plane with dense and discrete orbits. Also, cylinders are sections
of the flows derived from some differential equations (studied in \cite{poi}, Chapitre XIX,
pp. 202ff.; see also \cite[Section 8]{flem}).

The result of Besicovitch concerned only some particular $Tx=x+\alpha$ on $\T$ and
$f\:\T\to\R$. Therefore, a few natural questions arise: which cylinders have both dense
and discrete orbits? (Such cylinders are hereafter called \emph{Besicovitch cylinders}.)
For which rotations do such cylinders exist and how common are they? How many discrete orbits do they have?
How about other homeomorphisms $(X,T)$? These
problems were studied, among others, by Fr\k aczek and Lema\'nczyk in \cite{flem} and
by Kwiatkowski and Siemaszko in \cite{ksie}. In particular, in \cite{flem},
Besicovitch cylinders over every minimal rotation of tori $\T^d$ were constructed.
As for the amount of discrete orbits, it is known that the set of nonrecurrent points
in these cases
is small in both topological and measure-theoretical sense: it is of first
category (albeit dense) and of measure zero. Thus, the authors of \cite{flem} used some finer means 
to analyse the set of points with discrete orbits. Firstly, for every minimal rotation
of a torus $\T^d$ they found a Besicovitch cylinder with uncountably many discrete orbits.
Secondly, for almost every minimal rotation there is a Besicovitch cylinder for which
the points with discrete orbits have altogether the Hausdorff dimension at least $d + 1/2$
(that is, of codimension at most $1/2$). Also, the authors discovered some classes of regular examples
(in terms of H\"older continuity, Fourier coefficients or degree of smoothness).
They left as an open problem whether higher Hausdorff dimensions can be achieved. The (positive)
solution this problem is the main topic of the present paper: by enhancing the techniques from \cite{flem}
we have constructed  Besicovitch cylinders with full Hausdorff dimension of discrete orbits for
every minimal rotation of $\T^d$.

The present paper consists of four sections. Section 1 contains some preliminary facts on
cylindrical transformations that are relevant to our quest for Besicovitch cylinders;
in particular, we show that they form a first category set within some relevant function space.
In Sections 2 and 3, we present our construction, define some subsets of
\T\ and prove that their elements have discrete orbits (although there may also exist
other discrete orbits). The Hausdorff dimension
of these sets is calculated in Section 4. The last section introduces
a definition of chaos that some Besicovitch cylinders satisfy, which is also a possible
generalization of the Devaney chaos to noncompact dynamical systems.

\section{Cylindrical transformations}
\label{ss/cyl}
The cylindrical transformations are a special case of the concept of skew product
(see \cite{foko}, Subsection 10.1.3) in ergodic theory, transferred in a~natural way to
the topological setting. In general, they can be defined for a minimal homeomorphism $T$
of a compact metric space $X$ (the \emph{base}) with a $T$-invariant measure \m\ defined on
the Borel
$\sigma$-algebra, and a real continuous function $f\:X\too \R$ (which we will
customarily call a \emph{cocycle}). In the next sections, we will confine
ourselves to minimal rotations on tori with Lebesgue measure.%
    \footnote{Minimal rotations on compact groups do not always exist -- groups
    possessing them are
    called \emph{monothetic}. All tori $\T^n$ are monothetic, and a rotation on
    $\T^n$ is minimal precisely when its coordinates are irrational and $\mathbb{Q}$-%
    linearly independent; moreover, these rotations are uniquely ergodic with respect to
    Lebesgue measure.}
Now, $T$ and $f$ generate
a \emph{cylindrical transformation} (or a \emph{cylinder}):
\begin{align*}
    &T_f\:X\times\R\too X\times\R\\
    &T_f(x,t) \df (Tx,t+f(x))
\end{align*}
The iterations of $T_f$ are of the form $T_f^n(x,t) = (T^n x,t+f^{(n)}(x))$,
where $f^{(n)}$ is given by the formula:
\[
    f^{(n)}(x)\df
    \begin{cases}
    \ f(x) + f(Tx) + \dots + f(T^{n-1}x),& \text{for } n>0,\\
    \quad 0, & \text{for } n=0,\\
    \ - f(T^{-1}x) - f(T^{-2}x) - \dots - f(T^n x),& \text{for } n<0.
    \end{cases}
\]
Observe that the dynamics of a point $(x,t)$ does not depend
on $t$, because the mappings
\[
    \tau_{t_0}\: X\times\R \ni (x,t)\mapsto (x,t+t_0) \in X\times\R
\]
(for arbitrary $t_0\in \R$) are in the topological centralizer of $T_f$:
\begin{multline*}
    T_f(\tau_{t_0}(x,t)) = T_f(x,t+t_0) = (Tx,t+t_0+f(x)) =\\
    \tau_{t_0}(Tx,t+f(x)) = \tau_{t_0}(T_f(x,t)).
\end{multline*}

Unlike the compact case, a homeomorphism of a locally compact space (as here $\xr$) need
not have a minimal subset. The cylinder on a compact space is never minimal (as proved in
\cite{bes2}), thus it is meaningful to study minimal subsets.

From now on, we will usually assume that the base is a torus $X=\T^d$ or even the circle (with a minimal
rotation). Then, it is well-known that there are two cases in which the minimal subsets
can be easily described:
\begin{enumerate}
\def\theenumi{T\arabic{enumi}}
    \item when $\int_{\T^d} f\dm\mu \neq 0$, all points
    have closed discrete orbits, or, equivalently: for all $x\in X$: $|f^{(n)}(x)|
    \xrightarrow{n\to\infty} \infty$. \label{tr1}
    \item when the cocycle is a \emph{coboundary}, i.e.\ of the form $f= g - g\circ
    T$ for a (continuous) \emph{transfer function} $g\:\T^d\too\R$, the minimal sets are
    vertically translated copies of the graph of $g$ in $\T^d\times \R$. Conversely, if some
    orbit under $T_f$ is bounded, then so are all of them, and the cocycle $f$ is a
    coboundary (Gottschalk-Hedlund Theorem, \cite[Theorem 14.11]{gohe}).\label{tr2}
\end{enumerate}
Notice that if $f$ is
a coboundary and $T$ is measure-preserving, then $\int_{\T^d} f\dm\mu = 0$. Also, in both
cases \ref{tr1} and \ref{tr2} the phase space decomposes into minimal sets. In what follows we
will call cocycles that fulfil \ref{tr1} or \ref{tr2} \emph{trivial}.

\begin{thm} [Lema\'nczyk, Mentzen] 
	\label{t1}
	If $X=\T^d$, $T$ is a minimal rotation and a cocycle
	$f$ is not trivial, then the cylinder $T_f$ is transitive (see \cite{leme}, Lemmas 5.2,
	5.3). Therefore, if $f$ is of average zero, but $T_f$ has a closed discrete orbit, then
	it is automatically transitive (since by \ref{tr2} coboundaries have only bounded orbits).
\end{thm}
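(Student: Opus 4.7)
The plan is to deduce the second assertion from the first by direct elimination, and to prove the first via the topological essential-value group of the cocycle.

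For the second assertion, I would assume $\int_{\T^d} f\dm\mu = 0$ and that $T_f$ admits a closed discrete orbit. Such an orbit is necessarily unbounded, so the Gottschalk-Hedlund theorem recalled in \ref{tr2} prevents $f$ from being a coboundary; the zero-mean hypothesis excludes \ref{tr1}. Thus $f$ is nontrivial and the first assertion yields transitivity of $T_f$.

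For the first assertion, I would introduce the topological essential-value group
\[
	E(f) \df \{t \in \R : \forall \text{ nonempty open } U \subset X,\ \forall \e > 0,\ \exists x \in U,\ n \in \Z \text{ with } T^n x \in U \text{ and } |f^{(n)}(x) - t| < \e\}.
\]
Minimality of $T$ makes $E(f)$ a closed additive subgroup of $\R$, so $E(f)$ must equal $\{0\}$, $c\Z$ for some $c > 0$, or all of $\R$. The strategy is to exclude the first two possibilities under the nontriviality hypothesis and then to derive transitivity from $E(f) = \R$.

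The case $E(f) = \{0\}$ is handled by a Schmidt/Gottschalk-Hedlund-type argument: all vertical accumulations of an orbit collapse to zero, which forces $f$ to be a continuous coboundary, contradicting nontriviality. The main obstacle is the intermediate case $E(f) = c\Z$ with $c > 0$: exploiting unique ergodicity of $T$ on $\T^d$, one constructs a continuous transfer function $g \: \T^d \to \R$ such that $f - (g - g \circ T)$ is $c\Z$-valued; by connectedness of $\T^d$ this corrected cocycle is constant, and the zero-mean hypothesis forces the constant to vanish, making $f$ a coboundary once again. Finally, from $E(f) = \R$ transitivity follows by combining minimality of $T$ in the horizontal coordinate with the density of available vertical displacements: given open $U, V \subset X \times \R$, one picks $t \in E(f)$ close to the required vertical shift between designated points of $U$ and $V$, uses minimality to realize the horizontal transit, and glues via the defining property of $E(f)$ to produce a single iterate $T_f^n$ sending a point of $U$ into $V$.
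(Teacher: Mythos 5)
Your handling of the second assertion coincides exactly with the paper's parenthetical argument: a closed discrete orbit is unbounded, so by \ref{tr2} (Gottschalk--Hedlund) $f$ cannot be a coboundary, zero mean rules out \ref{tr1}, hence $f$ is nontrivial and the first assertion gives transitivity. For the first assertion the paper offers no proof at all --- it is quoted from \cite{leme}, Lemmas 5.2 and 5.3 --- and your essential-value outline is in substance the route taken there: the trichotomy $E(f)\in\{\{0\},\,c\Z,\,\R\}$ for the closed subgroup of topological essential values, elimination of the first two cases, and derivation of transitivity from $E(f)=\R$. Two points deserve flagging. First, in the case $E(f)=\{0\}$ the implication ``no nonzero essential values $\Rightarrow$ coboundary'' is false as stated: a cocycle with $\int_{\T^d} f\,\dm\mu\neq 0$ also has $E(f)=\{0\}$ (returns to a small set force $n$ large, hence $|f^{(n)}|$ large), yet it is not a coboundary. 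You must first use the failure of \ref{tr1}, i.e.\ zero mean together with unique ergodicity, to obtain recurrence of the cocycle before the Gottschalk--Hedlund-type collapse applies; your sketch uses nontriviality only as the final contradiction, not as an input to this step. Second, the two hard steps --- producing a \emph{continuous} transfer function $g$ with $f-(g-g\circ T)$ valued in $c\Z$, and upgrading $E(f)=\R$ to transitivity of $T_f$ by gluing minimal horizontal transit to controlled vertical displacements --- are precisely where the content of the cited lemmas lies, and your proposal asserts them rather than proves them. As a reconstruction of an attributed external theorem the plan is correctly oriented and consistent with the source, but it remains an outline at the places where the real work is done.
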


As in \cite{flem}, we consider cylindrical transformations that display both transitive
and discrete behaviour, called \emph{Besicovitch transformations} or \emph{Besicovitch
cylinders}, because their first example was given in \cite{bes2}. Given a homeomorphism
of the base, we will also call a cocycle which generates a Besicovitch cylinder
a \emph{Besicovitch cocycle}. For brevity, we will also write `discrete' instead of
`closed discrete'. By virtue of the condition \ref{tr1} and Theorem \ref{t1},
\emph{a cocycle is Besicovitch if and only if it has average zero and the resulting
cylinder has a discrete orbit}. This characterisation will be used in our paper. 

Unfortunately, Besicovitch cylinders are not easy to find. When $T$ is an irrational
rotation of the circle, too regular cocycles yield no minimal sets at all, as has been
proved by Matsumoto and Shishikuro, and, independently, by Mentzen and Siemaszko:
\begin{thm}[{\cite[Theorem 1]{mash}, \cite[Theorem 2.4]{mesi}}]
\label{bv-min}
If the cocycle on \T\ is nontrivial and of bounded variation, then the cylinders which
it generates have no minimal sets. In particular, they have no discrete orbits.
\end{thm}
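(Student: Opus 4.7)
The plan is to argue by contradiction and to exploit the Denjoy--Koksma inequality. For the rotation $T$ by an irrational $\alpha$ on $\T$, with continued fraction denominators $(q_n)$, and for any $f$ of bounded variation on $\T$ with $\int_\T f\dm\m=0$,
\[
    \sup_{x\in\T} \bigl|f^{(q_n)}(x)\bigr| \;\le\; V(f),
\]
where $V(f)$ denotes the total variation. Since $T^{q_n}x\to x$ uniformly and the vertical displacement $f^{(q_n)}(x_0)$ stays in the compact interval $[-V(f),V(f)]$, the $T_f$-orbit of any $(x_0,t_0)$ returns close to the fibre $\{x_0\}\times\R$ along the subsequence $(q_n)$, with the vertical excursion confined to a strip of width $2V(f)$.

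Assume that $M\subseteq\T\times\R$ is a minimal subset of $T_f$ and fix $(x_0,t_0)\in M$. Extracting a subsequence $f^{(q_{n_k})}(x_0)\to c\in[-V(f),V(f)]$, one obtains $(x_0,t_0+c)\in M$. Since each vertical translation $\tau_s(x,t)=(x,t+s)$ commutes with $T_f$, the image $\tau_s(M)$ is again minimal, and whenever $\tau_s(M)\cap M\ne\emptyset$ minimality of both forces $\tau_s(M)=M$. Hence the \emph{group of vertical periods}
\[
    H \df \{s\in\R : \tau_s(M)=M\}
\]
is a closed subgroup of $\R$ containing every accumulation point of $(f^{(q_n)}(x_0))_n$, so $H$ is one of $\{0\}$, $c\Z$ (for some $c>0$), or $\R$. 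The remainder of the argument excludes each alternative, deriving in each case a coboundary representation of $f$ that contradicts non-triviality.

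If $H=\{0\}$, every vertical fibre of $M$ is a singleton, so $M$ is the graph of a function $g\colon\pi(M)\to\R$. Closedness of $M$ together with the Denjoy--Koksma bound forces $g$ to be continuous and $\pi(M)=\T$; the $T_f$-invariance of $M$ then reads $f=g\circ T-g$. If $H=c\Z$ with $c>0$, we pass to the compact quotient $\T\times(\R/c\Z)\cong\T^2$, where Besicovitch's result prevents the whole of $\T^2$ from being the minimal image of $M$; the same graph argument in $\R/c\Z$ yields $f\equiv g\circ T-g\pmod c$ for a continuous $g\colon\T\to\R/c\Z$, and the hypothesis $\int f\dm\m=0$ lets one lift $g$ to a continuous real-valued transfer function, once more exhibiting $f$ as a coboundary. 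The hardest case is $H=\R$, which would force $M=\T\times\R$ and the flow to be minimal on the full cylinder; the contradiction must then be drawn from a finer analysis, for instance by writing any return time $n$ with $T^n x_0$ close to $x_0$ in Ostrowski form and applying Denjoy--Koksma blockwise to control $|f^{(n)}(x_0)|$, showing that the density demanded by minimality is incompatible with bounded variation.

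The chief obstacle is the case $H=\R$: ruling it out requires quantitative information on $f^{(n)}$ beyond the bare Denjoy--Koksma estimate on the denominators $q_n$. The other two cases are structurally cleaner, but one must still take care to verify continuity of $g$ on all of $\T$ (excluding a blow-up of $g$ on $\T\setminus\pi(M)$) when $H=\{0\}$, and to carry out the lifting from $\R/c\Z$ to $\R$ when $H=c\Z$, where the zero-average hypothesis is essential.
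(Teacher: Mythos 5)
First, a point of order: the paper does not prove this theorem at all --- it is imported verbatim from Matsumoto--Shishikura \cite[Theorem 1]{mash} and Mentzen--Siemaszko \cite[Theorem 2.4]{mesi} --- so there is no internal proof to compare yours against; I can only assess your sketch on its own terms. Your overall frame (Denjoy--Koksma, the closed group $H$ of vertical periods of a putative minimal set $M$, and the trichotomy $H=\{0\}$, $H=c\Z$, $H=\R$) is a reasonable skeleton, but each of the three branches has a genuine gap, and in two of them the gap is exactly where the bounded-variation hypothesis has to do its real work.

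Concretely: (i) In the case $H=\R$ you concede you have no argument; in fact this case is the easiest, since $H=\R$ forces $M=\T\times\R$, and the non-minimality of any cylinder over a compact base is Besicovitch's theorem \cite{bes2}, already quoted in Section \ref{ss/cyl} of the paper --- but you do not invoke it, and the ``Ostrowski form plus blockwise Denjoy--Koksma'' you gesture at is not a proof. (ii) In the case $H=c\Z$ your appeal to ``Besicovitch's result'' to exclude minimality of the quotient on $\T\times(\R/c\Z)$ is wrong: Besicovitch's theorem concerns $\R$-extensions, and compact group extensions of minimal rotations \emph{can} be minimal (e.g.\ $(x,y)\mapsto(x+\alpha,y+x)$ on $\T^2$), so the quotient being all of $\T\times(\R/c\Z)$ cannot be ruled out this way; moreover the lift of a continuous $g\colon\T\to\R/c\Z$ to a real-valued transfer function is obstructed by the degree of $g$, which the zero-average hypothesis on $f$ does not control. (iii) In the case $H=\{0\}$, the claim that closedness of $M$ plus Denjoy--Koksma forces $g$ to be continuous is unsubstantiated: Denjoy--Koksma bounds $f^{(q_n)}$ only along the denominators $q_n$, not all ergodic sums $f^{(m)}$, so neither the orbit nor the graph $M$ is shown to be (locally) bounded, and a closed graph of an unbounded, hence discontinuous, $g$ is not excluded by what you wrote. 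Ruling out such exotic closed invariant graphs is precisely the content of the cited theorems, so the sketch as it stands does not constitute a proof.
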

Moreover, the set of Besicovitch cocycles for any minimal compact base is
first category in the set of all cocycles with zero average -- for a proof,
see Subsection \ref{ss1}.

Given a cylinder $T_f$, we will denote
\begin{align*}
    \mathcal D\df&\; \{x\in X\mathpunct{:} \text{ the $T_f$-orbit of }(x,t) \text{ is discrete for every } t\in\R\}\\
    =&\; \{x\in X\mathpunct{:} \text{ the $T_f$-orbit of }(x,0) \text{ is discrete}\}.
\end{align*}
Then the set of points in \xr\ with discrete orbits equals $\mathcal D\times\R$.
Clearly, if $T$ is minimal and $\mathcal D\neq\emptyset$, then both $\mathcal D$
and $\mathcal D\times\R$ are dense in ambient spaces, as $\mathcal D$ is
$T$-invariant.

In \cite{flem}, the authors construct a Besicovitch cocycle for any minimal
rotation of a torus. They also find ones with some special properties,
in particular with relatively large $\mathcal D$.%
    \footnote{Recall also that the set of discrete orbits is of first category and of
    measure zero for minimal rotations of tori.}

\begin{thm}[\cite{flem}]
\label{fl1}
For every irrational rotation of \T\ there exist Besicovitch cocycles
\textup{(\cite[Section 2]{flem})}. The cocycles can be chosen in such a way that
$\mathcal D$ is uncountable \textup{(\cite[Proposition 6]{flem})}. Moreover, for almost
every irrational rotation one can find Besicovitch cocycles such that the Hausdorff
dimension of $\mathcal D$ is at least $1/2$ \textup{(\cite[Theorem~9]{flem})}.  
\end{thm}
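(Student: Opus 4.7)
The plan is to construct $f$ as a rapidly convergent sum $f=\sum_{n\ge 1} f_n$ of continuous zero-mean bumps adapted to the continued fraction expansion of $\alpha$. Writing $\alpha=[0;a_1,a_2,\ldots]$ with convergents $p_k/q_k$, the essential tool is the closest-return structure of the rotation: the orbit segment $0,\alpha,\ldots,(q_k-1)\alpha$ partitions \T\ into intervals of length comparable to $1/q_k$, and $\|q_k\alpha\|<1/q_{k+1}$. I would fix a sparse sequence of indices $k_1<k_2<\cdots$ and take each $f_n$ to be a continuous bump of amplitude $A_n$ supported on an interval of length $\asymp 1/q_{k_n}$, with zero integral. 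The closest-return structure together with a Denjoy--Koksma-type cancellation then forces the Birkhoff sum $f_n^{(m)}(x)$ to be essentially negligible until $m$ approaches a multiple of $q_{k_n}$, where it picks up roughly $\pm A_n$ on a union of short intervals in the $x$-variable, and to flatten again at much larger scales.

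For the existence of a Besicovitch cocycle it then suffices to pick the amplitudes so that $\sum_n A_n<\infty$ (so that $f$ converges uniformly to a continuous zero-mean function) while still arranging $|f^{(m)}(x_0)|\to\infty$ along $m=\pm q_{k_n}$ for a fixed base point $x_0$. Because the contributions at the distinguished scales are well separated, growing the indices $k_n$ fast enough (say $q_{k_{n+1}}\gg q_{k_n}^2$) and letting $A_n\to 0$ with $\sum_n A_n=\infty$ achieves both goals: continuity is preserved and $x_0\in\mathcal D$. The cocycle then has mean zero and unbounded Birkhoff sums, so it is nontrivial in the sense of \ref{tr1}--\ref{tr2}; by Theorem~\ref{t1} the cylinder is transitive and therefore Besicovitch.

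For uncountability, observe that at each level $n$ of the construction the set of $x$ for which $f^{(q_{k_n})}(x)$ attains its prescribed large value is not a single point but a union of $N_n\ge 2$ short subintervals of \T, one near each of several points of the orbit of $x_0$ under $T^{q_{k_n}}$, each of length of order $1/q_{k_{n+1}}$. Intersecting these level sets over all $n$ realises $\mathcal D$ as containing a Cantor scheme with branching at least $2$, hence uncountable. For the Hausdorff dimension lower bound one pushes the branching: arrange $N_n$ admissible intervals on level $n$ packed in a roughly arithmetic progression, and apply the standard mass distribution principle to the uniform probability measure on the resulting Cantor set to obtain
\[
    \dim_H \mathcal D \;\ge\; \liminf_{n\to\infty}\frac{\log(N_1 N_2\cdots N_n)}{\log q_{k_{n+1}}}.
\]
For almost every $\alpha$, L\'evy's theorem gives $(\log q_k)/k\to \pi^2/(12\log 2)$, so a careful choice of $(k_n)$ and of the $N_n$ can be made to push the right-hand side up to $1/2$.

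The most delicate step is the simultaneous balancing act: the amplitudes $A_n$ must not decay too fast, or no point will have a discrete orbit; but the product $A_n N_n$ may not grow too fast either, or else the interference of shifted bumps $f_m$ at the Cantor set points will overwhelm the desired divergence of $f^{(m)}(x)$ that witnesses discreteness of the orbit. Reconciling these two constraints against the Diophantine growth rate of $q_k$ is precisely what caps the dimension at $1/2$ in \cite{flem} -- and what the present paper will refine further to reach the full dimension of $\mathcal D\times\R$.
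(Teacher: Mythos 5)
First, a framing remark: the statement you are proving is quoted from \cite{flem} and the paper itself gives no proof of it, deferring to that reference; the closest thing to an in-paper proof is the strengthened construction of Sections \ref{s2}--4. Measured against that construction, your proposal contains a genuine structural error. You define the cocycle as a direct sum $f=\sum_n f_n$ of bumps of amplitude $A_n$ and require $\sum_n A_n<\infty$ for uniform convergence (and then, two sentences later, also $\sum_n A_n=\infty$ --- an outright contradiction). Even setting the contradiction aside, the convergent version cannot work: a zero-mean bump of amplitude $A_n$ supported on an interval has variation $O(A_n)$, so $\sum_n A_n<\infty$ makes $f$ of bounded variation, and by Theorem \ref{bv-min} a nontrivial BV cocycle over an irrational rotation generates a cylinder with \emph{no} minimal sets and no discrete orbits whatsoever. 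So no choice of base point $x_0$ can land in $\mathcal D$, and the Denjoy--Koksma cancellation you invoke is precisely the mechanism that kills you rather than helps you.

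The idea you are missing is that the series must be a sum of \emph{coboundaries}: one sets $\f(x)=\sum_l\bigl(f_l(x+\alpha)-f_l(x)\bigr)$, where the individual $f_l$ are $L_l$-Lipschitz and $1/(A_lq_{k_l})$-periodic with sup norms that \emph{tend to infinity} (in the paper, $\max f_l=q_{k_l}/(3A_ll^2)$). Uniform convergence comes not from summability of amplitudes but from $\lvert f_l(x+\alpha)-f_l(x)\rvert\leq L_l\lvert\alpha-p_{k_l}/q_{k_l}\rvert<1/l^2$, using periodicity to replace $\alpha$ by its distance to the convergent. The Birkhoff sums then telescope, $\f^{(m)}(x)=\sum_l\bigl(f_l(x+m\alpha)-f_l(x)\bigr)$ as in \eqref{e8}, and for $m$ in the window attached to level $n=n(m)$ the single term $f_n(x+m\alpha)-f_n(x)$ is of order $\max f_n\to\infty$ (with the other terms controlled in sign or in size), which is what witnesses discreteness. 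Your Cantor-scheme picture for uncountability and the mass-distribution / interval-counting bound for the Hausdorff dimension are in the right spirit --- they match Section 4 of the paper and \cite[Theorem 9]{flem}, including the role of L\'evy's theorem in the almost-everywhere statement --- but they are built on level sets of Birkhoff sums that, under your hypotheses on $f$, cannot exhibit the divergence you need.
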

It was left as an open problem whether the coefficient $1/2$ could be
improved or not. We answer it by developing the techniques from \cite{flem}:
for every irrational rotation of \T\ we have obtained Besicovitch cylinders with
$\mathcal D$ of full Hausdorff dimension (Conclusion \ref{con}). This construction is
presented in Section \ref{s2}.
\subsection{Nonrecurrent cylinders are first category}
\label{ss1}
We aim to show that, given a uniquely ergodic homeomorphism of a compact metric space as the base,
all cocycles admitting nonrecurrent orbits form a first category set in the space of zero-averaged
cocycles (with the uniform topology). In particular, we will prove that 
Besicovitch cocycles are of first category. Note that a minimal rotation of a compact metric group
is uniquely ergodic for the Haar measure.
\begin{proof}
Denote by $(X,\mu)$ the space, by $T$ a~uniquely ergodic homeomorphism thereof, and by $f\:X\too\R$
a cocycle. Also, $\lvert  p - q\rvert $ will denote the distance between $p,\,q\in\xr$
in the taxicab metric.

Recall that $p\in\xr$ is \emph{nonrecurrent} for \t\ if it is not recurrent, i.e. if its
positive semi-orbit lies outside some neighbourhood of $p$: there exists $\e>0$ such 
that $\lvert p - \t^k(p)\rvert\geq \e$ for every $k>0$. Thus, all the functions in question
are contained in the union (increasing as $\e\to 0$ or $n\to\infty$) $\bigcup_{\e>0} N_\e = \bigcup_{n=1}^\infty
N_{1/n}$, where
	\begin{align*}
	N_\e \df \{f\:X\to\R\mathpunct{:}\ &f \text{ is continuous, }\int_X f\,\mathrm{d}\mu = 0,\\
							&\lvert p-\t^k(p)\rvert\geq\e \text{ for some } p\in\xr
							\ \text{and all } k>0\}
	\end{align*}

To finish the proof, we will show that every $N_\e$ is closed and has empty interior --
hence their union, by definition, is of first category. From now on, an $\e>0$ will be
fixed.
\paragraph{\textnormal{1}} The set $N_\e$ has empty interior because the set of coboundaries
is dense (by the ergodic theorem for uniquely ergodic homeomorphisms), and the cylinders generated
by coboundaries have only recurrent points, so all coboudaries lie outside $N_\e$.
\paragraph{\textnormal{2}} To prove that $N_\e$ is closed, consider a uniformly convergent sequence
$(f_j)_{j=1}^\infty\subset N_\e$, $f_j\rightrightarrows f$. Let $p_j\in\xr$ be chosen for $f_j$
as in the definition of $N_\e$. We may assume that all $p_j$ lie in $X\times\{0\}$,
because the dynamic behaviour of a point wrt \t\ does not depend on its second
coordinate. Since $X$ is compact, $(p_j)$ has an accumulation point, say, $p_{j_n}\too p$.
We will show that this point satisfies the condition from the definition of $N_\e$ for
$f$.

By the choice of $p_j$, the following holds for all $k>0$ and $n>0$:
\begin{align*}
	\e &\leq \lvert p_{j_n} - \t[j_n]^k(p_{j_n})\rvert\\
	&\leq \lvert p_{j_n} - p\rvert + \lvert p - \t^k(p)\rvert
	+ \lvert\t^k(p) - \t^k(p_{j_n}) \rvert + \lvert\t^k(p_{j_n}) - \t[j_n]^k(p_{j_n})
	\rvert.
\end{align*}
After passing to the limit as $n\to\infty$ all but the second of the summands vanish.
Indeed, this is obvious for the first and the third one. As for the last summand, it follows form
the convergence $f_j\rightrightarrows f$: one can easily check that the supremum distance 
between arbitrary $T_g^k$ and $T_{g'}^k$ equals the supremum distance $\|g^{(k)}
- g'^{(k)}\|_{\sup}$, which is at most $k\|g - g'\|_{\sup}$, so
$\lvert\t^k(p_{j_n}) - \t[j_n]^k(p_{j_n})\rvert \leq k\|f - f_{j_n}\|_{\sup} \to 0$.
This finally proves that $\lvert p - \t^k(p)\rvert\geq \e$ for all $k>0$, and therefore
$f\in N_\e$.
\end{proof}
\subsubsection*{Remark} The proof remains valid for each Banach subspace $\mathcal F
\subset \mathcal C (X)$, satisfying the ergodic theorem, whose norm is
stronger than $\|\cdot\|_{\sup}$ and on which $T$ acts as an isometry (in particular,
for the space of H\"older continuous functions and for $\mathcal{C}^k(\T^d) \subset
\mathcal{C}(\T^d)$).
\section{Construction of a Besicovitch cylinder}
\label{s2}
Let $\alpha$ be an irrational number in $[0,1)$ and $(p_n/q_n)_{n\geq 0}$ its sequence of
convergents. Recall that then
 \begin{equation}
	\frac 1{2q_n q_{n+1}} < (-1)^n \left(\alpha-\frac{p_n}{q_n}\right)
	= \left\lvert\alpha-\frac{p_n}{q_n}\right\rvert < \frac 1{q_n q_{n+1}};
	\label{e0}
 \end{equation}
(by \cite{khi}, Theorems 9 and 13). Because $q_n\to\infty$, one can choose a subsequence
$(\q)_{n\geq 1}$ that grows quickly enough:
 \begin{gather}
	\qqk[1] \geq 9, \label{e4}\\
 	\q[+1] \geq 5 \q, \ \qk[+1] \geq 5\qk, \label{e1}\\
 	\frac 1n \log\q \to \infty. \label{e3}
 \end{gather}
We may also assume that
 \begin{equation}
	\text{all } k_n \text{ are even or all are odd.}
	\label{e17}
 \end{equation}
For example, we can set $k_n\df 4 n^2 + 1$, because always $q_6 \geq \operatorname{Fib}_6 =13$,
$\q[+1] \geq q_{4 + k_n} \geq \operatorname{Fib}_4 \q = 5 \q$ and the sequences $q_{4n^2 + 1} \geq
\operatorname{Fib}_{4n^2 + 1}$ grow superexponentially. Put additionally
 \begin{equation}
	\A\df \lfloor (3/4)^n\qk \rfloor >  (3/4)^n\qk -1 \text{\quad for } n\geq 1.
	\label{e2}
 \end{equation}
%
It follows that for $n\geq 2$ on the one hand
 \begin{multline}
	\frac {\A\qk[-1]}{\A[-1]\qk} \stackrel{\eqref{e2}}>\frac {((3/4)^n\qk -1)\od\qk[-1]}%
	 {(3/4)^{n-1}\qk[-1]\qk} = \frac 34 - \frac{(4/3)^{n-1}}{\qk}
	 \label{e5}\\	 
	 \stackrel{\eqref{e1}}\geq \frac 34 - \frac{(4/3)^{n-1}}{5^{n-2}\qqk[2]}
	 \geq \frac 34 - \frac {4/3}{\qqk[2]} \stackrel{\eqref{e4},\, \eqref{e1}} > \frac{18}{25},
 \end{multline}
and on the other hand
 \begin{multline}
	\frac {\A[-1]\qk}{\A\qk[-1]} \stackrel{\eqref{e2}}>\frac {((3/4)^{n-1}\qk[-1]-1)\od \qk}
	 {(3/4)^n\qk\qk[-1]} = \frac 43 - \frac{(4/3)^n}{\qk[-1]} \\
	 \stackrel{\eqref{e1}}\geq \frac 43 - \frac{(4/3)^n}{5^{n-2}\qqk[1]}
	 \geq \frac 43 - \frac {16/9} {\qqk[1]} \stackrel{\eqref{e4}}> 1.1,
 \end{multline}
hence altogether
 \begin{gather}
	 1.1 < \frac {\qk}{\A} : \frac{\qk[-1]}{\A[-1]} < 25/18.
	\label{e6}
\intertext{This also proves that}
	 \text{the sequence } \qk/\A \text{ rises exponentially.} \label{e11}
 \end{gather}
For the sake of brevity, we will also denote
 \[
	\L\df \q\qk/n^2, \quad \text{for } n\geq 1.
 \]
 
We consider a modification of the example from \cite[Section 2]{flem}: we define $f_n$ to be
\L-Lipschitz, $1/(\qa)$-pe\-ri\-od\-ic and even continuous function (hence also $f_n(\frac
1{\qa} - x) = f_n(x)$) by the formulas:
 \[
 	f_n(x)\df
	 \begin{cases}
	\phantom{.}\ 
	0, &\text{for \ } 0\leq x \leq \dfrac 1{12\qa},\\
	\phantom{\Bigg|}\ 
	\L\left(x- \dfrac 1{12\qa} \right), &\text{for \ } \dfrac 1{12\qa} \leq x
		\leq \dfrac 5{12\qa},\\
	\phantom{\Bigg|}\  
	\dfrac {\qk}{3\A n^2}, &\text{for \ } \dfrac 5{12\qa} \leq x \leq \dfrac 1{2\qa}.
	 \end{cases}
 \]
By periodicity:
 \begin{multline*}
	\lvert f_n(x+\alpha) - f_n(x)\rvert = \left\lvert f_n\left(x+\alpha - \frac{\A p_{k_n}}
	{\qa}\right) - f_n(x)\right\rvert \\
	\stackrel{f_n \text{ is $\L$-Lipsch.}}\leq \L \left\lvert \alpha - \frac{p_{k_n}}{\q}\right\rvert
	\stackrel{\eqref{e0}}< \frac {\q\qk}{n^2}
	\frac 1{\q\qk} = 1/n^2,
 \end{multline*}
so the series
 \begin{gather}
	\f(x) \df \sum_{l=1}^\infty (f_l(x+\alpha) - f_l(x))\nonumber
\intertext{converges uniformly and yields a continuous cocycle of average zero.
Moreover, it is easy to verify that for every $m\in\Z$:}
	\f^{(m)}(x) = \sum_{l=1}^\infty (f_l(x+m\alpha) - f_l(x)),\label{e8}
 \end{gather}
where $\f^{(m)}(x)$ is the second coordinate of $T_\f^m (x,0)$ (we recall that
$T_\f^m (x,t) = (T^m(x), t+ \f^{(m)}(x))$ for every $x$ and $t$).
\section{Discrete orbits}
Consider, for $n\geq 1$ and $j=0,\dotsc,\qa -1$:
 \begin{align*}
	&F^{++}_{n,j} \df \left[-\frac 1{12\qa} , \frac 1{12\qa}\right] + \frac j\qa,
	 \displaybreak[0]\\
	&F^{-+}_{n,j} \df \left[\frac 1{6\qa} , \frac 1{3\qa}\right] + \frac j\qa,\displaybreak[0]\\
	&F^{--}_{n,j} \df \left[\frac 5{12\qa} , \frac 7{12\qa}\right] + \frac j\qa = F^{++}_{n,j}
	 + \frac 1{2\qa},\displaybreak[0]\\
	&F^{+-}_{n,j} \df \left[\frac 2{3\qa} , \frac 5{6\qa}\right] + \frac j\qa = F^{-+}_{n,j}
	 + \frac 1{2\qa}
 \end{align*}
and for arbitrary $s_-, s_+\in\{+,-\}$
\[	
	F^{s_- s_+} \df \bigcap_{n=1}^\infty \bigcup_{j=0}^{\qa -1} F^{s_- s_+}_{n,j}.
\]
The sets $F^{s_- s_+}$ are nonempty and uncountable; indeed, every interval
$F^{s_- s_+}_{n-1,j}$ contains at least
 \begin{multline}
	\left\lfloor\frac{\lvert F^{s_- s_+}_{n-1,j} \rvert}{1/(\qa)}\right\rfloor - 1
	= \left\lfloor\frac \qa{6\qa[-1]}\right\rfloor - 1 \\= \left\lfloor\frac 16 \frac {\A\qk[-1]}
	{\A[-1]\qk} \cdot \frac{\qk}{\qk[-1]} \cdot \frac{\q}{\q[-1]}\right\rfloor
	- 1 \stackrel{\eqref{e5},\, \eqref{e1}}> \lfloor \frac 16 \cdot \frac{18}{25} \cdot 25\rfloor - 1 = 2
	\label{e18}
 \end{multline}
of the intervals $F^{s_- s_+}_{n,j}$, since the intervals from the $n$-th union 
are uniformly distributed with period $1/(\qa)$; therefore, the intersections
$F^{s_- s_+}$ are topological Cantor sets.

We will now show that the products $F^{s_-s_+}\times\R$ consist of \emph{discrete points},
i.e.\ points with discrete orbits, which proves that
$T_\f$ is a Besicovitch cylinder.
	More precisely, we will show that for every $x\in F^{s_- s_+}$:
\begin{itemize}
	\item if $s_+ = s_-$, then
\[
	\f^{(m)}(x)\xrightarrow{m\to \pm\infty} s_+\,\infty,
\]
	\item if $s_+ \neq s_-$, then
\[
	\f^{(m)}(x)\xrightarrow{m\to \pm\infty} (-1)^{k_n} s_{\pm}\,\infty,
\]
where the coefficient $(-1)^{k_n}$ is constant (cf.\ \eqref{e17}).
\end{itemize}
Later, in the next section, we will verify that these sets are of full Hausdorff dimension.

\subsection{The case of \texorpdfstring{$F^{++}$}{F++}}
Fix an element $x\in F^{++}$ and an integer $|m| > \qqk[1]/(3A_1)$. We wish to bound the
summands $f_l(x+m\alpha) -f_l(x)$ from below. To this end, recall that $x$ determines a sequence
$(j_l)^\infty_{l=1}$ such that $x\in F^{++}_{l,j_l}$ for every $l\in\N$ and let $x_l$ be
given by $x_l \df x - j_l/(\qqa)$; then $|x_l| \leq 1/(12\qqa)$. Now, by the
properties of $f_l$
 \begin{multline}
	f_l(x+m\alpha) - f_l(x) = f_l(x_l+m\alpha) - f_l(x_l) = f_l(x_l+m\alpha) \\
	= f_l\left(x_l+m\alpha -	\frac{m A_l p_{k_l}}{\qqa}\right) = f_l\left(x_l+m\left(\alpha -
	\frac{p_{k_l}}{\qq}\right)\right),
	\label{e7}
 \end{multline}
which implies that
 \begin{gather}
	\label{e10}
	f_l(x+m\alpha) - f_l(x)\geq 0.
 \end{gather}
Because of \eqref{e11}, there exists a unique $n = n(m)$ which satisfies
\[
	\qk[-1]/(2\A[-1])\leq |m|< \qk/(2\A),
\]
and when $|m|$ tends to infinity, so does $n(m)$. Sucvh assumption enables us to estimate the $n$-th summand
of $\f^{(m)}$:
\begin{align*}
	&\bullet\ \left\lvert m\left(\alpha - \frac{p_{k_n}}{\q}\right) \right\rvert \stackrel{\eqref{e0}}< \frac \qk{2\A}\cdot
	 \frac 1{\q\qk} = \frac 1{2\qa},\\
	&\bullet\ \left\lvert m\left(\alpha - \frac{p_{k_n}}{\q}\right) \right\rvert \stackrel{\eqref{e0}}> \frac {\qk[-1]}{2\A[-1]}
	 \cdot \frac 1{2\q\qk} = \frac 1{4\qa} \cdot \frac {\qk[-1]}{\A[-1]} \cdot \frac\A\qk\\
	 &\phantom{\bullet\ m\left(\alpha - \frac{p_{k_n}}{\q}\right)> \frac {\qk[-1]}{2\A[-1]}
	 \cdot \frac 1{2\q\qk} =\quad }
	 \stackrel{\eqref{e5}}> \frac 1{4\qa} \cdot \frac{18}{25} = \frac 9{50\qa}.
\end{align*}
Therefore, owing to the bound for $x_n$,
\[
	\cramped%
	{\frac 1{12} + \frac 1{75} = \frac 9{50} - \frac 1{12} < \left\lvert\!\left(x_n + m\alpha - \frac{mp_{k_n}}{\q}\right) \! \qa \!\right\rvert < \frac 1{12} } + \frac 12 < 1 - \frac 1{12} - \frac 1{75}.
\]
This leads to the bound we seek, since $f_n$ is even, symmetrical and unimodal on $[0, 1/(\qa)]$:
 \begin{multline*}
	f_n\left( x_n+m\left(\alpha - \frac{p_{k_l}}{\qq}\right)\! \right) = f_n\left(\left\lvert x_n+m\left(\alpha - \frac{p_{k_l}}{\qq}\right)\!\right\rvert \right) \\
	> f_n\left(\frac 1{12\qa} + \frac 1{75\qa}\right)
	= L_n \cdot \frac 1{75\qa} = \frac \qk{75\A n^2} \xrightarrow{\eqref{e11}} \infty,
 \end{multline*}
and finally proves the required divergence:
 \begin{multline*}
	\f^{(m)}(x) \stackrel{\eqref{e8}} = \sum_{l=1}^\infty (f_l(x+m\alpha) - f_l(x))
	\stackrel{\eqref{e10}}\geq f_{n(m)}(x+m\alpha) - f_{n(m)}(x)\\
	\stackrel{\eqref{e7}}= f_n\left(x_n+m\left(\alpha - \frac{p_{k_n}}{\q}\right)\!\right)
	\xrightarrow{|m|\to\infty} \infty.
 \end{multline*}
\subsection{The case of \texorpdfstring{$F^{--}$}{F--}}
The behaviour of functions $f_n$ on the set $F^{--}_{n,j}$
is symmetrical to the situation on $F^{++}_{n,j}$, and the calculations are
analogous.
\subsection{The case of \texorpdfstring{$F^{-+}$}{F-+} and \texorpdfstring{$F^{+-}$}{F+-}}
Choose an $x\in F^{-+} \cup F^{+-}$. Again, there is
$(j_l)^\infty_{l=1}$ such that $x\in F^{-+}_{l,j_l} \cup F^{+-}_{l,j_l}$ for every
$l\in\N$, and we denote by $x_l$ the respective ``reductions'' $x - j_l/(\qqa)$; then 
\[
	x_l \in \left[\frac 1{6\qqa}, \frac 1{3\qqa}\right]\ (s_+=+) \text{\quad or\quad}
	x_l\in \left[\frac 2{3\qqa}, \frac 5{6\qqa}\right]\ (s_+=-).
\]
Additionally, fix an integer $|m| > \qqk[1]/(12 A_1)$. It follows
from the periodicity of $f_l$ that
 \begin{multline}
	f_l(x+m\alpha) - f_l(x) = f_l(x_l + m\alpha) - f_l(x_l)\\
	= f_l(x_l + m(\alpha - p_{k_l}/\qq)) - f_l (x_l).
	\label{e9}
 \end{multline}
We remind that $\operatorname{sign} (\alpha - p_{k_l}/\qq) \stackrel{\eqref{e0}}= (-1)^{k_l}
\stackrel{\eqref{e17}}= (-1)^{k_1}$. Take now $n=n(m)\geq 1$ for which
\begin{equation}
	\qk/(12\A)\leq |m|< \qk[+1]/(12\A[+1]). \label{e15}
\end{equation}
These constraints along with the inequalities \eqref{e0} imply that for $l>n$
 \begin{align*}
	\left\lvert m\left(\alpha - \frac{p_{k_l}}{\qq}\right)\right\rvert
	\stackrel{\eqref{e0}}< \frac {\qk[+1]}{12\A[+1]} \cdot \frac 1{\qq\qqk}
	\stackrel{\eqref{e6}}\leq \frac {\qqk}{12A_l} \cdot \frac 1{\qq\qqk} = \frac 1{12\qqa}.
 \end{align*}
Therefore, both arguments $x_l + m(\alpha - p_{k_l}/\qq)$
and $x_l$ lie in the same interval of linearity (and monotonicity) of
$f_l$, so the sign of the difference \eqref{e9} equals $(-1)^{k_1} s_+\operatorname{sign} m$
(it does not depend on $l$) and the expression \eqref{e9} can be estimated:
 \begin{multline*}
	\left\lvert f_l\left(x_l + m\left(\alpha - \frac{p_{k_l}}{\qq}\right)\right) - f_l(x_l) \right
	\rvert = L_l \left\lvert m\left(\alpha - \frac{p_{k_l}}{\qq}\right)\right\rvert\\
	\stackrel{\eqref{e0},\, \eqref{e15}}> \frac {\qq\qqk}{l^2} \cdot \frac {\qk}{12\A} \cdot
	\frac 1{2\qq\qqk} = \frac {\qk}{24\A l^2}.
 \end{multline*}
Since all these differences are of the same sign, this yields an estimate for the part of the
sum \eqref{e8} with $l>n$:
 \begin{equation}
	\cramped{%
	\left\lvert \sum_{l>n} \left(f_l\!\left(x_l + m\!\left(\alpha - \frac{p_{k_l}}{\qq}\right)\!
	\right) - f_l(x_l)\right) \right\rvert > \frac {\qk}{24\A} \sum_{l>n}
	\frac1{l^2} \stackrel{(\star)}> \frac {\qk}{25\A n},%
	}
	\label{e12}
 \end{equation}
where the inequality $(\star)$ holds for $n$ large enough, which results form the fact that
the remainder $\sum_{l>n} 1/l^2$ is asymptotically equivalent to $1/n$ (thus greater than
$24/(25n)$ for large $n$).%
	\footnote{This follows from the termwise equivalence to a telescoping series of
	$1/n$:\\
	\phantom{.}\hfill\(
	\sum_{l\geq n+1} \frac 1{(l+1)^2} - \sum_{l\geq n+2} \frac 1{(l+1)^2} = 1/(n+1)^2
	\approx (1/n) - 1/(n+1),
	\)\hfill\hfill \\
	and from an analogue of the Stolz-Ces\`aro Theorem.}

As it occurs, we do not have to work hard to take the remaining summand into account --
it suffices to subtract the upper bounds of the functions $f_l$:
 \begin{equation}
	\left\lvert \sum_{l\leq n} (f_l(x_l + m\alpha) - f_l(x_l)) \right\rvert
	\leq \sum_{l\leq n} 2\max_{x\in\T} f_l = \frac 23\sum_{l\leq n} \frac {\qqk}{A_l l^2}
	\label{e14}
 \end{equation}
Note that this sum behaves roughly like the sum of a finite geometric series: since $\qqk/A_l$
grows exponentially and, asymptotically, $l^2$ grows slower, the quotient for large $l$ also grows exponentially, say:
\[
	\frac {\qqk}{A_l l^2} \geq C \frac {\qqk[l-1]}{A_{l-1} (l-1)^2} \text{\quad for some } C>1
	\text{ and } l \text{ large enough}
\]
(e.g.\ when $l^2/(l-1)^2 < 1.1/C$). Then, indeed, the sum \eqref{e14} is of order of its largest
term, and therefore we arrive at a satisfactory bound:
 \begin{equation}
	\frac 23 \sum_{l\leq n}\frac{\qqk}{A_l l^2} \leq\frac 23 \sum_{l\leq n}\frac 1{C^{n-l}}
	\cdot\frac{\qk}{\A n^2}	< \frac 23 \cdot \frac C{C-1}\cdot \frac{\qk}{\A n^2}
	\stackrel{(\star\star)}< \frac{\qk}{50 \A n},
	\label{e13}
 \end{equation}
where the inequality $(\star\star)$ also holds for large $n$. Combining the estimations
\eqref{e12}, \eqref{e14} and \eqref{e13}, we eventually obtain the required divergence:
 \begin{multline*}
	|\f^{(m)}(x)| \stackrel{\eqref{e8},\, \eqref{e9}}{=\!=} \left\lvert \sum_{l\geq 1}
	\left(f_l\left(x_l + m\left(\alpha - {p_{k_l}}/{\qq}\right)\!\right) - f_l(x_l)\right)
	\right\rvert\\
	\geq\left\lvert \sum_{l>n(m)} \cdots \right\rvert - \left\lvert \sum_{l\leq n(m)} \cdots
	\right\rvert \stackrel{\text{\scriptsize (\ref{e12},\ref{e14},\ref{e13})}}>
	\frac{\qk}{25 \A n} - \frac{\qk}{50 \A n} = \frac{\qk}{50 \A n}
	\mathop{\xrightarrow{m\to\pm\infty}}\limits_{\eqref{e11}} \infty.
 \end{multline*}
Also, the sign of $\f^{(m)}$ is correct, because the prevailing part has correct sign.

\begin{rem}
Observe that the calculations for $F^{+-}$ and $F^{-+}$ (in this and the previous
section) do not require all the assumptions on $k_n$ and \A\ that we have made initially.
Actually, we only need that $k_n$ are all of the same parity, $\qk/\A$ grows at least
geometrically, and
\(
	\qa \geq 18 \qa[-1].
\)
In particular, the restriction for the growth of $\qk/\A$ (as in \eqref{e5}) is redundant
-- for example, we may put $\A\df 1$ for every $n$ (then we have to ensure the inequality
$\q\geq 18\q[-1]$). Moreover,
we do not use the pieces of constant value of the functions $f_n$. Summarizing, the sets
$F^{+-}$ and $F^{-+}$ also consist of discrete points in the following example from
\cite[Section 2]{flem}:
 \begin{gather}
	\f(x) \df \sum_{n=1}^\infty (g_n(x+\alpha) - g_n(x))
	\nonumber\\
\intertext{where $g_n$ are \L-Lipschitz, $1/\q$-pe\-ri\-od\-ic continuous functions:}
 	g_n(x)\df
	 \begin{cases}
	\L x, &\text{for } 0\leq x \leq \dfrac 1{2\q},\\
	\L\left( \dfrac 1\q - x \right), &\text{for } \dfrac 1{2\q} \leq x \leq \dfrac 1\q.
	 \end{cases}
 \end{gather}
and $\q\geq 18\q[-1]$ (this coefficient can be decreased by widening $F^{s_-s_+}_{n,j}$
appropriately).
\end{rem}
\section{Hausdorff dimension of \texorpdfstring{$F^{s_- s_+}$}{F s-s+}}
To compute the Hausdorff dimension of $F^{s_- s_+}$, we will use methods from \cite{fal}
(Example 4.6 and Proposition 4.1):\\
{\itshape Consider a sequence of unions of a finite number of disjoint closed intervals
in $[\n]0,1\n()$ \emph{(here: the sequence $(\bigcup_{j=0}^{\qa -1} F^{s_- s_+}_{n,j})%
_{n\geq 1}$)}. Suppose that the intervals of the $n$-th union ($n\geq 1$) 
\begin{itemize}
	\item are of length at most $\delta_n$ and $\delta_n\to 0$,
	\item are separated by gaps of length at least $\e_n$ (with $\e_n> \e_{n+1}>0$),
	\item contain at least $m_{n+1}\geq 2$ and at most $\overline m_{n+1}$ intervals of the
	$(n+1)$-st union.
\end{itemize}
Then the Hausdorff dimension of the intersection of this sequence lies between the following
two numbers:
	\[
	\liminf _{n\to\infty} \frac {\log (m_2\cdots m_{n})}{-\log(m_{n+1}\e_{n+1})} \leq 
	\liminf _{n\to\infty} \frac {\log (\overline m_2\cdots \overline m_{n})}{-\log \delta_{n+1}}.
	\] 
}
First, note that $\delta_n = |F^{s_- s_+}_{n,j}| = 1/(6\qa)\to 0$. Next, observe that
 \[
	\e_n = \frac 1{\qa} - |F^{s_- s_+}_{n,j}| >  \frac 1{\qa} - \frac 1{6\qa}
	> \frac 1{2\qa}.
 \]
As for $m_n$ and $\overline m_n$, we have already checked that $m_n\geq 2$ (see \eqref{e18}),
but we need a more precise estimate. Using the inequality $\lfloor t \rfloor -1 > t/2$ for
$t\geq 3$, we conclude that:
\[
	m_n \geq \left\lfloor\frac {|F_{n-1,j}^{s_- s_+}|}{1/\qa}\right\rfloor - 1 = 
	\left\lfloor\frac \qa {6\qa[-1]}\right\rfloor - 1 \geq \frac \qa {12\qa[-1]}.
\]
On the other hand, only one more interval can fit into:
\[
	\overline m_n \leq \left\lfloor\frac {|F_{n-1,j}^{s_- s_+}|}{1/\qa}\right\rfloor
	\leq \frac \qa {6\qa[-1]}.
\]
Consequently:
 \begin{align*}
	m_2 \cdots m_n &\geq \frac {\qqa[2]}{12\qqa[1]} \cdots \frac \qa{12\qa[-1]}
	= \frac \qa{12^{n-1}\qqa[1]},\\
	m_{n+1}\e_{n+1} &\geq \frac 1{12} \frac{\qa[+1]}\qa \cdot \frac 1{2\qa[+1]} = \frac
	1{24\qa},\\
	\overline m_2 \cdots \overline m_n &\leq \frac \qa{6^{n-1}\qqa[1]},
 \end{align*}
hence eventually
 \begin{align*}
	\dim_H F^{s_-s_+} &\geq \liminf _{n\to\infty} \frac{\log \qa - (n-1)\log 12 - \log\qqa[1]}
	{\log\qa + \log 24}\\
	&= 1 - \limsup_{n\to\infty} \frac n{\log\qa}\cdot \log 12,\\
	\dim_H F^{s_-s_+} &\leq \liminf _{n\to\infty} \frac{\log \qa - (n-1)\log 6 - \log\qqa[1]}
	{\log\qa + \log 6} \\
	&= 1 - \limsup_{n\to\infty} \frac n{\log\qa}\cdot \log 6.
 \end{align*}
Let us remark that the coefficient $12$ can be lowered nearly to $6$, if $m_n$ are larger.
Nevertheless, under the assumption \eqref{e3} the dimension equals $1$.
\begin{con}
	\label{con}
	For every irrational rotation of \T\ there exists a Besicovitch cocycle
	such that the set $\mathcal{D}\times \R\ (\supset F^{s_-s_+}\times \R)$ of discrete
	points of the respective cylinder has Hausdorff dimension two.
\end{con}

\section{Discrete Devaney chaos}
The sole property of transitivity is enough for some dynamicists to call a dynamical system
chaotic. However, over the years multiple definitions for chaos have been proposed.
Let us recall the notion of the Devaney chaos, one of the most popular ones: a dynamical
system $(X,T)$ on a metric space $(X,d)$ is \emph{chaotic in the sense of Devaney} if:
 \begin{enumerate}
	\item it is transitive,
	\item the set of periodic points is dense,
	\item the system is \emph{sensitive}, i.e.\ there are points around every point $x\in X$
	(arbitrarily close) whose orbits at least once diverge far enough from the orbit of $x$:
	there is $\e>0$ such that for every $x\in X$ and $\delta>0$ there are $n>0$ and $y$ with
	$d(x,y)< \delta$ and $d(T^n(x), T^n(y))> \e$.
 \end{enumerate}
We remind that the the last condition follows from the remaining ones, if $X$ is infinite
(\cite{bbcd}, main theorem, or \cite{gw}, Corollary 1.4).

It occurs that the dynamical systems we consider in this article satisfy a bit more general
condition, namely,
with ``periodic orbits'' replaced by ``discrete orbits'' (note that both notions are
equivalent in compact spaces). We will call this property \emph{discrete Devaney chaos} and
check this fact in a moment. A similar generalization was proposed in \cite{gw}, with
``almost periodic'' (that is, contained in a minimal set) instead of ``periodic'' and it was
 shown that this, combined with transitivity, implies sensitivity, if $X$ is compact.
Note also, that there are no periodic points in cylinders over minimal rotations, so they
cannot be Devaney chaotic.

Recall first that a space or a set is \emph{boundedly compact} if bounded closed subsets are
always compact.%
	\footnote{Such spaces are also given other names in the literature: they are called
	 \emph{proper}, \emph{finitely compact}, \emph{totally complete}, \emph{Heine-Borel} or
	 having the \emph{Heine-Borel property} (not to be confused with the Heine-Borel [covering]
	 property, or precompactness, that is, ``every open cover has a finite subcover'').}
In particular, closed subsets of Euclidean spaces are boundedly compact. All such spaces are
complete and separable. Also, a system is called \emph{maximally sensitive}, if it sensitive with
every~\mbox{$\e<\operatorname{diam}(X)/2$}, and \emph{maximally chaotic}, if it is Devaney chaotic
and maximally sensitive (definitions introduced in \cite{alp}).
 \begin{thm}
	Let $X$ be an infinite, boundedly compact space without isolated points, and let $T$ be
	transitive with dense set of discrete points. Then the system is sensitive. If, moreover,
	the set of discrete nonperiodic points is dense, then the system is maximally sensitive.
 \end{thm}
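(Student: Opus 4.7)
The plan is to split on whether $X$ is compact, reducing the compact case to the classical Banks--Brooks--Cairns--Davis--Stacey theorem of \cite{bbcd} and handling the non-compact case directly. As preparation, note that $X$ is Polish: bounded compactness gives completeness (a Cauchy sequence is bounded, hence eventually in some compact ball, hence convergent) and covers $X$ by countably many compact balls for separability; Baire category then applies, and topological transitivity yields a dense $G_{\delta}$ set of points with dense forward orbit. Also, in a boundedly compact metric space every infinite closed discrete subset escapes every bounded set, i.e.\ tends to infinity. If $X$ is compact, every closed discrete subset is finite, so discrete orbits coincide with periodic orbits; the hypotheses then reduce to those of \cite{bbcd} and yield sensitivity, while the second hypothesis (dense discrete nonperiodic points) becomes ``$\emptyset$ is dense in $X$'' and is vacuous.

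Suppose now $X$ is non-compact, so $\operatorname{diam}(X)=\infty$; I claim the system is $\e$-sensitive for every $\e>0$ simultaneously, which settles both halves of the theorem at once. Fix $\e>0$ and suppose towards a contradiction that some $x\in X$ and $\delta>0$ satisfy $d(T^n x,T^n y)\le \e$ for every $y\in B(x,\delta)$ and every $n>0$. By density of discrete points pick a discrete $p\in B(x,\delta)$ and split on whether $p$ is periodic. If $p$ has period $N$, then $T^{kN}p=p$ forces $T^{kN}y\in\overline{B(p,2\e)}$ for all $y\in B(x,\delta)$ and $k\ge 0$. Choose a transitive $z\in B(x,\delta)$ (using the dense $G_{\delta}$ of transitive points) and decompose
\[
\{T^n z\}_{n\ge 0}=\bigcup_{i=0}^{N-1}T^i\bigl(\{T^{kN}z\}_{k\ge 0}\bigr)\subseteq\bigcup_{i=0}^{N-1}T^i\bigl(\overline{B(p,2\e)}\bigr),
\]
a finite union of compact sets; passing to closures, $X=\overline{\{T^n z\}}$ sits inside a compact set, so $X$ itself is compact, contradicting the case assumption. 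If instead $p$ is nonperiodic, its orbit is infinite closed discrete, so $T^n p\to\infty$; the bound $d(T^n y,T^n p)\le 2\e$ propagates this to every $y\in B(x,\delta)$, so a transitive $z\in B(x,\delta)$ has its dense forward orbit tending to infinity in $X$. Then that orbit has no accumulation point in $X$, hence is closed; being also dense it equals $X$, making $X$ a countable discrete set and contradicting the absence of isolated points.

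The main obstacle is the periodic subcase, where one has to convert the local trapping of $B(x,\delta)$ under $T^N$ near $p$ into the global statement that $X$ is compact; the natural way is to partition the forward orbit of a transitive point into $N$ residue classes modulo $N$ and use continuity of $T^i$. The nonperiodic subcase, by contrast, is essentially immediate from bounded compactness once one observes that ``closed discrete and infinite'' forces escape to infinity. Altogether, the second hypothesis of the theorem turns out to be vacuous in the compact case and redundant in the non-compact one, which is why proving the stronger conclusion (maximal sensitivity) costs no extra work beyond proving plain sensitivity.
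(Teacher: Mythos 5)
Your proof is correct, but it is organised around a different case split than the paper's. The paper decomposes the \emph{dense set of discrete points} into the two invariant classes (periodic / nonperiodic discrete), argues via a positively transitive point in the closure of their union that one class must be dense, quotes \cite{bbcd} when the periodic points are dense, and in the nonperiodic case gets sensitivity directly: inside $B(x,\delta)$ it takes a point $y_1$ whose forward orbit returns $\varepsilon$-close to $x$ infinitely often and a nonperiodic discrete point $y_2$ whose orbit escapes past $3\varepsilon$, so that $d(T^n y_1, T^n y_2)>2\varepsilon$ and the triangle inequality finishes. You instead split on whether $X$ is \emph{compact}, reduce the compact case to \cite{bbcd} (where discrete $=$ periodic and the second hypothesis is vacuous), and in the non-compact case run a proof by contradiction in which a trapped ball either drags a forward-transitive orbit into a compact set $\bigcup_{i<N} T^i(\overline{B(p,2\varepsilon)})$ (periodic $p$, forcing $X$ compact) or makes that orbit escape to infinity and hence be closed, discrete and dense (nonperiodic $p$, forcing isolated points). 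Your route needs an extra lemma the paper avoids -- the periodic trapping argument -- but it buys something genuine: in the non-compact case you obtain sensitivity for \emph{every} $\varepsilon>0$ from mere density of discrete points, so you get maximal sensitivity even when it is the periodic points that happen to be dense in a non-compact $X$, a situation where the paper's proof only yields plain sensitivity via \cite{bbcd}. Two trivial repairs: in the periodic subcase the inclusion $T^{kN}y\in\overline{B(p,2\varepsilon)}$ is only guaranteed for $k\geq 1$ (the hypothesis is quantified over $n>0$), which is harmless since the finitely many omitted points do not affect compactness of the orbit closure; and you should say explicitly that $T^i$ of a compact set is compact by continuity, which is what makes the finite union compact.
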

\begin{proof}
Since $X$ is complete, separable and without isolated points, the system is even
positively transitive (it has a dense semi-orbit -- see \cite{ox}, p.\ 70; the proof was
recalled in \cite{dy1}, Proposition 2.1).
The set of discrete points consists of periodic points and nonperiodic discrete points,
both of which are invariant. Thus, one of these sets contains a positively transitive point in
its closure, and so it is dense. If periodic points are dense, then, by \cite{bbcd} or
\cite{gw}, the system is sensitive. The new result is when the second set is dense, what we
assume henceforth.

Any infinite (= nonperiodic) discrete orbit, by bounded compactness, has no bounded
subsequence, so $\operatorname{diam}(X) = \infty$. Fix then any $x\in X$, $\e>0$ and $\delta>0$.
In the $\delta$-neighbourhood of $x$ there is a point $y_1$ with dense semi-orbit and a discrete nonperiodic
point $y_2$. Then, for infinitely many $n>0$ the orbit of $y_1$ returns to $x$: $d(T^n(y_1),x)<\e$,
and on the other hand, for $n$ large enough the orbit of $y_2$ stays far away from $x$:
$d(T^n(y_2),x)>3\e$ (by bounded compactness again). Consequently, for some $n>0$: $d(T^n(y_1), T^n(y_2))>2\e$, and hence 
$d(T^n(x), T^n(y_1))>\e$ or $d(T^n(x), T^n(y_2))>\e$.
\end{proof}
\begin{rem}
The Besicovitch cylinders that we consider are of course transitive and have a dense set of
discrete points (we have found discrete points in $F^{s_-s_+}\times\R$, but their orbits are
dense). Therefore, there are examples of maximally discretely chaotic
systems with full-dimensional set of relatively ``regular'' (almost periodic, discrete)
points. This feature seems not to be studied so far. However, there are results about
full Hausdorff dimension of the set of points with nondense orbits, although they are rather
concerned with bounded orbits -- see e.g.\ \cite{kl, ur}.
\end{rem}

\end{document}